\newtheorem{theorem}{Theorem}[section]
\newtheorem{lemma}[theorem]{Lemma}
\newtheorem{corollary}[theorem]{Corollary}
\title{Biaxial escape in nematics at low temperature}
\author{Andres Contreras
\thanks{
Department of Mathematics and Statistics, McMaster University, Hamilton, Ontario, Canada L8S 4K1, contrera@math.mcmaster.ca
}
\and
Xavier Lamy
\thanks{
Universit\'e de Lyon, Institut Camille Jordan,  CNRS UMR 5208, Universit\'e Lyon 1, 43 blvd. du 11 novembre 1918, F-69622 Villeurbanne cedex, France, xlamy@math.univ-lyon1.fr
}
}
\begin{document}
\maketitle

\begin{abstract}
In the present work, we study minimizers of the Landau-de Gennes free
energy in a bounded domain $\Omega\subset \mathbb{R}^3$.  We prove
that at low temperature minimizers do not vanish, even for
topologically non-trivial boundary conditions. This is in contrast
with a  simplified Ginzburg-Landau model for superconductivity studied
by Bethuel, Brezis and H\'{e}lein. Merging this with an observation of Canevari we obtain, as a corollary, the occurence of \textit{biaxial escape}: the tensorial order parameter must become strongly biaxial at some point in  $\Omega$. In particular, while it is known that minimizers cannot be purely uniaxial, we prove the much stronger and physically relevant fact that they lie in a different homotopy class.
\end{abstract}

\section{Introduction}
\label{intro}

Nematic liquid crystals are composed of rigid rod-like molecules which tend to align in a preferred direction. As a result of this orientational order, nematics present electromagnetic properties similar to those of crystals. A striking feature of nematics  is the appearance of particular optical textures called \textit{defects}. From the mathematical point of view, the study of these defects is carried out using a tensorial order parameter $Q$ (introduced by P.G. de Gennes \cite{degennes}). The $Q$-tensor takes values in the five-dimensional space
\begin{equation}\label{Qspace}
\mathcal S = \left\lbrace Q\in \mathbb R^{3\times 3}\colon Q_{ij}=Q_{ji},\: \mathrm{tr}\: Q = 0 \right\rbrace,
\end{equation} 
of symmetric traceless $3\times 3$ matrices.
As a symmetric matrix, a $Q$-tensor  has an orthonormal frame of
eigenvectors: the eigendirections are the locally preferred mean
directions of alignment of the molecules, and the eigenvalues measure
the degrees of alignment along those directions. In this context, {\it uniaxial} states are described by $Q$-tensors with two equal eigenvalues, and {\it biaxial} states correspond to $Q$-tensors with three distinct eigenvalues. 

The configuration of a nematic material contained in a domain $\Omega\subset\mathbb R^3$ is given by a map $Q\colon \Omega \to\mathcal S$. At equilibrium, $Q$ should minimize the Landau-de Gennes free energy given by
\begin{equation}\label{F}
F_T(Q)=\int_\Omega \left( \frac L2 |\nabla Q|^2 + f_T(Q) \right) dx.
\end{equation}
Here $L$ is an elastic constant and $f_T (Q)$ is the bulk free energy density, usually considered to be of the form 
\begin{equation}\label{bulk}
f_T(Q)=\frac{\alpha(T-T_*)}{2} |Q|^2 - \frac b3 \mathrm{tr}(Q^3) + \frac c4 |Q|^4.
\end{equation}
Above $\alpha$, $b$ and $c$ are material-dependent positive constants, $T$ is the absolute temperature and $T_*$ a critical temperature. For $T<T_*$, the bulk free energy density $f_T(Q)$ attains its minimum exactly on the vacuum manifold $\mathcal N_T\subset\mathcal S$ composed of uniaxial $Q$-tensors with a certain fixed norm:
\begin{equation}\label{N}
\begin{gathered}
\mathcal N_T =\left\lbrace Q\in\mathcal S\colon Q=s_*\left(n\otimes n -\frac 13 I\right),\: n\in\mathbb S^2 \right\rbrace,\\
 s_*= s_*(T) = \frac {b+\sqrt{b^2-24 \alpha (T-T_*)c}}{4c}.
\end{gathered}
\end{equation}
Above, the notation $n\otimes n$ denotes the matrix $(n_in_j)$.
Note that $\mathcal N_T$ is diffeomorphic to the projective plane $\mathbb R \mathbb P ^2$. In this work we consider minimizers of  $F_T(Q)$ subject to Dirichlet boundary conditions $Q_{b,T}\colon\partial\Omega\to\mathcal N_T$ minimizing the potential $f_T(Q)$:
\begin{equation}\label{bcond}
Q_{b,T}(x)=s_*\left(n_b(x)\otimes n_b(x)-\frac 13 I\right),\quad  n_b\colon \partial\Omega\to\mathbb S^2.
\end{equation}
In the London limit $L\to 0,$ a minimizing $Q$-tensor must be close to an $\mathcal N_T$-valued harmonic map $Q_*$, that is a minimizer of the Dirichlet energy among maps with values in the manifold $\mathcal N_T$.  This is analogous to the case of the simplified Ginzburg-Landau energy with prescribed topologically nontrivial boundary conditions studied in \cite{BBH}; in this setting it is proved that minimizers of the corresponding energy converge to harmonic maps with values in $\mathbb S^1,$ which are then forced to have singularities, known in that context  as vortices.

 The singularities of the director field $n_*$  associated to the limit of minimizers of $F_T(Q)$ correspond to the optical defects observed in experiments. In the core of a defect, two possible behaviors are considered in the physics literature. The notion of {\it  isotropic melting} refers to a $Q$-tensor  vanishing in the core of the defect. This is comparable to the behaviour observed in the core of Ginzburg-Landau vortices, and can be achieved by remaining in a uniaxial state. Alternatively, $Q$-tensors may take advantage of the additional degrees of freedom offered by biaxiality: instead of vanishing in the core of the defect, the $Q$-tensor order parameter may become strongly biaxial. This last behaviour is referred to as {\it biaxial escape} \cite{sonnetkilianhess95}.

Biaxial escape has been first proposed as a way to avoid singularities of the director field by Lyuksyutov \cite{lyuksyutov78}. The corresponding mechanism has been investigated in greater detail by Penzenstadler and Trebin \cite{penzenstadlertrebin89}, followed by a number of further studies (see e.g. \cite{sonnetkilianhess95,rossovirga96,gartlandmkaddem00,kraljvirga01} ). These works indicate that biaxial escape should be energetically favorable when the bulk free energy \eqref{bulk} degenerates to a Ginzburg-Landau-like potential, which occurs for instance at low temperature.

Our main result states that, at low temperatures, isotropic melting is indeed avoided: the minimizing configurations do not vanish. 

\begin{theorem}\label{nomelting}
Let $\Omega\subset\mathbb R^3$ be a smooth bounded simply connected domain. Let $n_b\colon\partial\Omega\to\mathbb S^2$ be a smooth director field and $Q_{b,T}\colon\partial\Omega\to\mathcal N_T$ the associated boundary datum \eqref{bcond}. Let $Q_T$ be a solution of the variational problem
\begin{equation*}
\min \left\lbrace F_T(Q) \colon Q\in H^1(\Omega ; \mathcal S),\: Q=Q_{b,T}\text{ on }\partial\Omega\right\rbrace,
\end{equation*}
where $F_T$ is the Landau-de Gennes free energy \eqref{F}. Then, there exists $T_0\in\mathbb R$ (depending on $\Omega,L,\alpha,b,c$), such that if $T<T_0$, 
$$
\inf_\Omega |Q_T| > 0,
$$
i.e. $Q_T$ does not vanish in $\Omega .$
\end{theorem}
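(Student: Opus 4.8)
The plan is to recast the problem, via a temperature-dependent rescaling, as a Ginzburg--Landau-type problem whose small parameter is $\varepsilon(T)\to0$ and whose potential well flattens out on a $4$-sphere, and then to exploit the fact that — in contrast with the $\mathbb S^1$-valued model of \cite{BBH}, where $\pi_1(\mathbb S^1)\neq0$ forces zeros — here $\pi_2(\mathbb S^4)=0$, so the topologically nontrivial datum can be extended without zeros; the real work is to show that the minimizer actually realizes this.

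\medskip\noindent\textbf{Step 1 (rescaling).} Write $Q=s_*P$ with $s_*=s_*(T)$ as in \eqref{N}; since $T\to-\infty$ forces $s_*\to\infty$, the parameter $\varepsilon:=\sqrt L/s_*$ tends to $0$. Up to an additive and a multiplicative constant, $F_T(s_*P)=E_\varepsilon(P):=\int_\Omega\big(\tfrac12|\nabla P|^2+\tfrac1{\varepsilon^2}W_\tau(P)\big)dx$, where $W_\tau\geq0$ vanishes exactly on $\mathcal N_T/s_*$, a copy of $\mathbb{RP}^2$ inside the sphere $\mathbb S_a:=\{|P|=a\}\subset\mathcal S$, $a=\sqrt{2/3}$; moreover $W_\tau\to W_\infty:=\tfrac c4(|P|^2-a^2)^2$ locally uniformly, $W_\tau\geq c_1>0$ on $\{|P|\leq a/2\}$ uniformly in $T$, and — the crucial point — $\max_{\mathbb S_a}W_\tau\leq C\varepsilon\to0$. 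The rescaled Dirichlet datum $P_b=n_b\otimes n_b-\tfrac13 I$ is independent of $T$. Finally, the maximum principle applied to $|Q_T|^2$ — using that $s_*\sqrt{2/3}$ is exactly the threshold above which the cubic nonlinearity governing $|Q_T|^2$ is sign-definite — gives $|P_T|\leq a$ throughout.

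\medskip\noindent\textbf{Step 2 (upper bound).} Extend $n_b$ to $\bar n\in H^1(\Omega;\mathbb S^2)$ — always possible, with at worst finitely many point singularities, there being no obstruction in the $H^1$ category for maps from a $3$-d domain into $\mathbb S^2$ — and set $\bar P=\bar n\otimes\bar n-\tfrac13 I\in H^1(\Omega;\mathbb{RP}^2)$, so $W_\tau(\bar P)\equiv0$. Then $E_\varepsilon(P_T)\leq E_\varepsilon(\bar P)=\tfrac12\int_\Omega|\nabla\bar P|^2=:\Lambda$, uniformly in $T$; in particular $\int_\Omega W_\tau(P_T)\leq\Lambda\varepsilon^2$, so $P_T$ is, up to a set of measure $O(\varepsilon)$, $\mathbb{RP}^2$-valued. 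Elliptic regularity makes $P_T$ smooth up to $\partial\Omega$.

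\medskip\noindent\textbf{Step 3 (no zeros).} Suppose, for contradiction, that $Q_{T_k}(\bar x_k)=0$ with $T_k\to-\infty$, $\bar x_k\to x_\infty\in\overline\Omega$. By the standard asymptotic analysis of such functionals, $P_{T_k}$ converges strongly, away from finitely many interior points, to an energy-minimizing harmonic map $P_*\colon\Omega\to\mathbb{RP}^2$ with $P_*=P_b$ on $\partial\Omega$, the convergence being smooth near regular points; since $|P_*|\equiv a>0$ there, $x_\infty$ must be one of the singular points of $P_*$, near which $P_*$ is asymptotic to a degree $\pm1$ hedgehog into $\mathbb{RP}^2$ (classification of minimizing tangent maps). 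Blowing up at scale $\varepsilon_k$ around $\bar x_k$, the maps $\tilde P_k(y):=P_{T_k}(\bar x_k+\varepsilon_k y)$ minimize $\int(\tfrac12|\nabla\tilde P|^2+W_{\tau_k}(\tilde P))$, satisfy $\tilde P_k(0)=0$, $|\tilde P_k|\leq a$, have energy growing at most linearly in the radius, and — once one checks, via boundary regularity, that $\mathrm{dist}(\bar x_k,\partial\Omega)\gg\varepsilon_k$ — converge in $C^1_{\mathrm{loc}}(\mathbb R^3)$ to a locally energy-minimizing solution $\tilde P_\infty$ of $\Delta\tilde P=\nabla W_\infty(\tilde P)$ with $\tilde P_\infty(0)=0$, asymptotic at infinity to a nonconstant hedgehog. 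This contradicts the fact that a locally minimizing solution of the model Ginzburg--Landau system on $\mathbb R^3$ with hedgehog behavior at infinity cannot vanish: any configuration with a zero carries, near that zero, a fixed positive amount of gradient-plus-potential energy, whereas one can instead let $\tilde P$ take values in $\mathbb S_a$ near the core, which costs no potential energy ($W_\infty$ vanishes on all of $\mathbb S_a$) and carries no topological price ($\pi_2(\mathbb S_a)=\pi_2(\mathbb S^4)=0$), strictly lowering the energy. Hence no such $\tilde P_\infty$ exists, so $Q_T$ does not vanish for $T$ small; continuity up to the compact $\overline\Omega$ then yields $\inf_\Omega|Q_T|>0$.

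\medskip\noindent The main obstacle is Step 3, and inside it the strict energy inequality ``escape beats melt'' for the model functional with hedgehog data — equivalently, the absence of zeros in minimizers of $\int_{B_R}\big(\tfrac12|\nabla P|^2+\tfrac c4(|P|^2-a^2)^2\big)$ with hedgehog boundary conditions on $\partial B_R$. Although $\pi_2(\mathbb S^4)=0$ removes any topological forcing of a zero, turning this into a quantitative gain requires comparing the melting and escaped core profiles, which agree in scaling (each with energy $\asymp R$ on $B_R$) and differ only by an $O(1)$ amount favoring the escaped one. This is precisely where the low-temperature hypothesis does its work, for it is what sends $\varepsilon\to0$ and $\max_{\mathbb S_a}W_\tau\to0$, placing the defect region of the minimizer in the regime of this flat-well model; the remaining technical burden — the blow-up/compactness, the lower bound on $\mathrm{dist}(\bar x_k,\partial\Omega)$, and the identification of the limiting profile with a genuine hedgehog at infinity — follows the now-standard pattern of the $\varepsilon\to0$ analysis of Ginzburg--Landau energies.
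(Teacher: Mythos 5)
Your Steps 1 and 2, and the first half of Step 3 (strong convergence to an $\mathbb{RP}^2$-valued minimizing harmonic map away from finitely many interior singularities, localization of the putative zeros at such a singularity, blow-up at the core scale to a locally minimizing entire solution $\tilde P_\infty$ of the limiting Ginzburg--Landau system with $\tilde P_\infty(0)=0$ and linear energy growth) match the paper's argument. The gap is exactly where you flag it: the assertion that such a $\tilde P_\infty$ cannot vanish. The heuristic ``any configuration with a zero carries a fixed positive amount of energy near the zero, whereas one can instead take values in $\mathbb S_a$ at no topological price, strictly lowering the energy'' is not a proof. To lower the energy you must exhibit a competitor agreeing with $\tilde P_\infty$ outside a compact set and having strictly smaller energy; projecting onto $\mathbb S_a$ near the core requires $|\tilde P_\infty|$ to be bounded away from $0$ on the boundary of the surgery region (otherwise the projection blows up the Dirichlet energy), which is essentially what you are trying to prove, and in any case both the ``melted'' and the ``escaped'' profiles cost $O(1)$ on a fixed ball, so no naive comparison yields a strict inequality. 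The observation that $\pi_2(\mathbb S^4)=0$ removes the topological obstruction does not by itself produce the quantitative gain.

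The paper closes this gap by a second rescaling in the opposite direction: it blows \emph{down} the entire local minimizer ($\underline Q_R(x)=Q_\infty(Rx)$, $R\to\infty$), uses the linear energy growth to bound the rescaled functionals $G_R(\underline Q_R)$, and then invokes the strong $H^1$ and locally uniform convergence results of Millot and Pisante (resting on Lin and Wang's Theorem C) to show that $\underline Q_R$ converges to an $\mathbb S^4$-valued \emph{minimizing} harmonic map $\underline Q$ on $B_1$. Schoen--Uhlenbeck's regularity theorem then gives that $\underline Q$ is smooth --- this is where the topology of the target actually does its work, through the absence of nonconstant minimizing tangent maps from $\mathbb R^3$ into $\mathbb S^4$ --- whence $|\underline Q_R|\to 1$ uniformly on $B_1$, contradicting $\underline Q_R(0)=0$. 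Without this blow-down step (or some substitute, such as a clearing-out/$\eta$-compactness lemma for the vector Ginzburg--Landau system), your Step 3 does not close, so the proof as written is incomplete at its decisive point.
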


To prove Theorem~\ref{nomelting}, we use the fact that any zero $x_T$  of $Q_T$ must converge, as $T\to -\infty ,$ to a point $x_0 \in \Omega ;$ this follows from the analysis in \cite{majumdarzarnescu10}. After this, we take advantage of the degeneracy of the bulk potential to a Ginzburg-Landau potential in the low temperature limit. The Ginzburg-Landau potential being minimized by $\mathbb S^4$-valued maps, we are able to relate $Q_T$ to an $\mathbb S^4$-valued harmonic map. This is done through a blow-up analysis of $Q_T$ at $x_T$ which  in turn leads to a local minimization problem in $\mathbb R^3 $ for a limiting map $Q_\infty$. Next, thanks to the study in \cite{MillPis} based on the work of Lin and Wang \cite{LinWang}, a  blow-down analysis of the limiting map using the minimality of $Q_\infty$ yields strong convergence to a harmonic map with values in $\mathbb{S}^4.$ The conclusion follows with the help of a regularity result for minimizing harmonic maps by Schoen and Uhlenbeck \cite{schoenuhlenbeck84}.

Next we explain how Theorem~\ref{nomelting} is related to the phenomenon of biaxial escape. Of course, Theorem~\ref{nomelting} is more interesting when the boundary condition $n_b$ is topologically non-trivial. In that case, a recent remark of Canevari \cite[Lemma~3.10]{canevari14} shows that the only way for $Q_T$ to avoid isotropic melting is to be {\it strongly} biaxial. To give a precise meaning to this statement, we recall the definition of the biaxiality parameter for a $Q$-tensor,
\begin{equation}\label{beta}
\beta (Q) = 1-6 \frac{(\mathrm{tr}(Q^3))^2}{|Q|^6},
\end{equation}
introduced in \cite{kaiserwiesehess92}. It holds that
$
0\leq\beta(Q)\leq 1,
$
and $Q$ is uniaxial for $\beta=0$, biaxial for $\beta>0$ and is said to be \textit{maximally biaxial} for $\beta=1$. Canevari's lemma implies the following corollary to our main result:

\begin{corollary}\label{biax}
If the boundary datum $n_b\colon\partial\Omega\to\mathbb S^2$ is topologically non-trivial, then for low enough temperatures $T<T_0$, any minimizing configuration $Q_T$ must be strongly biaxial:
\begin{equation*}
\beta(Q_T(x_0)) = 1
\end{equation*}
for some $x_0\in\Omega$.
\end{corollary}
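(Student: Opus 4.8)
The plan is to deduce Corollary~\ref{biax} directly from Theorem~\ref{nomelting} together with the topological observation of Canevari \cite[Lemma~3.10]{canevari14}; the function of Theorem~\ref{nomelting} is precisely to supply the one hypothesis under which that observation applies, namely that the minimizer $Q_T$ vanishes nowhere in $\Omega$. I would first collect three elementary facts. (i) Being a solution of the Euler--Lagrange system of $F_T$, the tensor $Q_T$ is smooth in $\Omega$ and continuous on $\overline\Omega$ by standard elliptic regularity (see also \cite{majumdarzarnescu10}). (ii) Its trace $Q_T|_{\partial\Omega}=Q_{b,T}$ is uniaxial, so $\beta(Q_T)\equiv0$ on $\partial\Omega$; in particular $\mathrm{tr}(Q_{b,T}^3)=\tfrac29 s_*^3>0$ there. (iii) By Theorem~\ref{nomelting}, for $T<T_0$ one has $\inf_\Omega|Q_T|>0$, hence (with (i)) $\inf_{\overline\Omega}|Q_T|>0$, so $\widetilde Q_T:=Q_T/|Q_T|$ is a well-defined continuous map from $\overline\Omega$ into the unit sphere $\mathbb S^4\subset\mathcal S$, whose restriction to $\partial\Omega$ is a rescaled uniaxial tensor field built from $n_b$.

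The heart of the matter is then the obstruction mechanism behind \cite[Lemma~3.10]{canevari14}, which I sketch for completeness. Suppose, for contradiction, that $\beta(Q_T)<1$ on all of $\overline\Omega$, i.e.\ $\mathrm{tr}(Q_T^3)\neq0$ throughout; since $\overline\Omega$ is connected and $\mathrm{tr}(Q_T^3)>0$ on $\partial\Omega$ by (ii), continuity forces $\mathrm{tr}(Q_T^3)>0$ on all of $\overline\Omega$. A short eigenvalue computation (using $\mathrm{tr}\,\widetilde Q_T=0$) shows that wherever $\mathrm{tr}(\tilde Q^3)>0$ the largest eigenvalue of $\tilde Q\in\mathbb S^4$ is simple, so the associated eigenline depends continuously on $\tilde Q$; consequently the set $\{\tilde Q\in\mathbb S^4:\mathrm{tr}(\tilde Q^3)>0\}$ deformation retracts onto a copy of $\mathbb{RP}^2$ inside $\mathbb S^4$, diffeomorphic to $\mathcal N_T$. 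Composing $\widetilde Q_T$ with this retraction yields a continuous map $\overline\Omega\to\mathbb{RP}^2$ extending the boundary map $\pi\circ n_b$, where $\pi\colon\mathbb S^2\to\mathbb{RP}^2$ is the double cover. Since $\Omega$ is simply connected, the existence of such an extension is precisely what the topological non-triviality of $n_b$ forbids: this is the content of \cite[Lemma~3.10]{canevari14} (in the prototypical case $\partial\Omega\cong\mathbb S^2$ it amounts to saying that the extension forces the class of $\pi\circ n_b$ in $\pi_2(\mathbb{RP}^2)$ to vanish, hence $\deg n_b=0$, because $\pi_*\colon\pi_2(\mathbb S^2)\to\pi_2(\mathbb{RP}^2)$ is an isomorphism). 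This contradiction shows that $\beta(Q_T)$ cannot stay below $1$ on $\overline\Omega$.

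It follows that $\beta(Q_T)=1$ at some $x_0\in\overline\Omega$; since $\beta(Q_T)\equiv0$ on $\partial\Omega$ and $\beta\circ Q_T$ is continuous on the compact set $\overline\Omega$, necessarily $x_0\in\Omega$, which is the assertion of Corollary~\ref{biax}. I do not anticipate a genuine difficulty in this deduction: all the analytic effort has been invested in Theorem~\ref{nomelting}, and what remains is the purely topological Lemma~3.10 of \cite{canevari14}. The only steps needing any care — identifying the homotopy type of $\{\beta<1\}\cap\mathbb S^4$ (equivalently, describing its two connected components $\{\pm\mathrm{tr}(\tilde Q^3)>0\}$ and their retraction onto $\mathbb{RP}^2$) and the bookkeeping relating the resulting $\pi_2$-class of the boundary datum to $\deg n_b$ — are already carried out in \cite{canevari14}, so in the final write-up it suffices to invoke that lemma after checking its two hypotheses: non-vanishing of $Q_T$, granted by Theorem~\ref{nomelting}, and topological non-triviality of $n_b$.
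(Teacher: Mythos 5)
Your proposal is correct and follows essentially the same route as the paper: combine the non-vanishing of $Q_T$ from Theorem~\ref{nomelting} with the topological obstruction of \cite[Lemma~3.10]{canevari14} (almost uniaxial plus topologically non-trivial boundary data forces a zero), then locate the maximally biaxial point in the interior since $\beta\equiv 0$ on $\partial\Omega$. The only difference is that you unfold the proof of Canevari's lemma (the retraction of $\{\mathrm{tr}(\tilde Q^3)>0\}\cap\mathbb S^4$ onto $\mathbb{RP}^2$ via the simple top eigenvalue), which the paper merely cites with a remark on why it carries over to three dimensions; your sketch of that step is accurate.
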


In fact, in \cite{canevari14}  Canevari uses the aforementioned lemma to prove a theorem similar to Corollary~\ref{biax}, in the case  of a two-dimensional domain. Our result is a three-dimensional analog of \cite[Theorem~1.1]{canevari14}, and could probably be adapted to provide a simpler proof of \cite[Theorem~1.1]{canevari14}.

Corollary~\ref{biax} generalizes a recent result by Henao,
Majumdar and Pisante \cite{henaomajumdarpisante14}. In
\cite{henaomajumdarpisante14}, the authors show that for low enough
temperature, minimizers can not be purely uniaxial (that is, can not
satisfy $\beta=0$ everywhere). Note that such a result does not
exclude the existence of approximately uniaxial minimizers, while
Corollary~\ref{biax} does. Moreover the results of the second author
in \cite{lamy14} indicate that the uniaxiality constraint is very
rigid: non-existence of purely uniaxial solutions may not be
specific to low temperature or energy minimization.
 In contrast, Corollary~\ref{biax} is really specific to the low temperature limit.

The article is organized as follows. 
In Section~\ref{spropmin} we reformulate the problem and recall some basic convergence properties of minimizers of $F_T$. 
In Section~\ref{sblowup} we study the blown-up problem, obtain a limiting map and derive its minimal character. 
In Section~\ref{sblowdown} we conclude the proof of Theorem~\ref{nomelting} with the aid of a blow-down analysis. 
Finally, in Section~\ref{sproofbiax} we prove Corollary~\ref{biax} and make some final remarks.

\subsection*{Acknowledgements} AC would like to thank his postdoctoral
supervisors S.~Alama and L.~Bronsard for their support and
encouragement. He was funded by McMaster's postdoctoral fellowship. This work was carried out during XL's
visit at McMaster University. XL thanks the Department of Mathematics
and Statistics, in  particular S.~Alama and L.~Bronsard, for their
hospitality, and the `Programme Avenir Lyon Saint-Etienne' for its
financial support. He also wishes to thank his Ph.D. advisor
P.~Mironescu for his constant support and helpful advice.

\section{Properties of minimizing Q-tensors}\label{spropmin}

\subsection{Rescaling}
\label{rescal}

 Introducing the reduced temperature $t$ and rescaled maps $\widetilde Q$: 
\begin{equation*}
t:=\frac{-\alpha (T-T_*) c}{b^2},\quad \widetilde Q := \frac{1}{s_*}\sqrt{\frac 32}Q,
\end{equation*}
 we see that, for some constant $K=K(\alpha,b,c,T)$ which plays no role in the sequel,
\begin{equation*}
F_T(Q)=\frac{s_*^2 b^2}{3c}\int_\Omega\left(\frac{\widetilde L}{2}|\nabla \widetilde Q|^2 + \frac{t}{2} (|\widetilde Q|^2 - 1)^2 + \lambda(t)\,h(\widetilde Q ) \right) dx \ + K ,
\end{equation*}
where $\widetilde L = 3cL/b^2$,
\begin{equation}\label{lambda}
\lambda(t) = \frac{\sqrt{24t+1}+1}{12 } \underset{t\to +\infty}{\sim} \sqrt{\frac t6},
\end{equation}  
and
\begin{equation}\label{h}
h(\widetilde Q)=  \frac{1}{6}-\frac{2\sqrt 2}{\sqrt 3}\mathrm{tr}(\widetilde Q^3)+ \frac 12 |\widetilde Q|^4.
\end{equation}
It holds that $h(Q)\geq 0$ for every $Q\in\mathcal S$, 
and the potential $h$ vanishes exactly at
\begin{equation}\label{tildeN}
\widetilde{\mathcal N} =\left\lbrace \sqrt{\frac 32}\left( n\otimes n-\frac 13 I\right) \colon n\in\mathbb S^2 \right\rbrace.
\end{equation}

The limit $T\to -\infty$ corresponds to $t\to +\infty$.
Therefore we may reformulate the problem:  show that minimizers $Q_t$ of the energy functional
\begin{equation}\label{tildeFt}
\widetilde F_t (Q) = \int_\Omega \left(\frac{\widetilde L}{2}|\nabla Q|^2 + \frac t2 (| Q|^2 - 1)^2 + \lambda(t)\, h( Q) \right)\, dx
\end{equation}
subject to the boundary condition
\begin{equation}\label{tildeQb}
Q_t=\widetilde Q_b = \sqrt{\frac 32} \left( n_b \otimes n_b -\frac 13 I \right)\quad\text{on }\partial\Omega,
\end{equation}
do not vanish for large enough $t$. 

We prove Theorem~\ref{nomelting} by contradiction: we assume the existence of  sequences $t_j\to + \infty$ and $(x_j)\subset\Omega$ such that $Q_{t_j}$ minimizes \eqref{tildeFt}-\eqref{tildeQb} and $Q_{t_j}(x_j)=0$. Note that any minimizer of $\widetilde F_t$ is smooth thanks to standard elliptic estimates (see e.g. \cite[Proposition~13]{majumdarzarnescu10}), so that evaluation at $x_j$ makes sense. Up to extracting a subsequence, we may assume in addition that $x_j\to x_*\in\overline\Omega$. 

In the sequel we study the behaviour of the sequence $(Q_{t_j})$ and obtain a contradiction. To simplify the notations, we drop the subscript $j$: we write $(Q_t)$ and $(x_t)$ and it is always implied that a subsequence is considered.

\subsection{Convergence}\label{conv}

Since the set $H^1_{n_b}(\Omega;\mathbb S^2)=\lbrace n\in H^1(\Omega;\mathbb S^2)\colon n_{|\partial\Omega}=n_b\rbrace$ is not empty, we may use an $\widetilde{\mathcal N}$-valued comparison map and obtain the bound
\begin{equation}\label{boundFt}
\widetilde F_{t} (Q_{t}) =\int_\Omega \left(\frac{\widetilde L}{2}|\nabla Q_t|^2 + \frac {t}{2} (| Q_{t}|^2 - 1)^2 + \lambda(t)\, h( Q_{t}) \right)\, dx\leq C.
\end{equation}
In particular, we see that the sequence $(Q_{t})$ is bounded in $H^1(\Omega;\mathcal S)$. Up to extracting a subsequence, we may therefore assume that $Q_{t}$ converges weakly to a limiting map $Q_*\in H^1(\Omega;\mathcal S)$. Moreover, since the bound \eqref{boundFt} implies
\begin{equation*}
\int_\Omega h(Q_{t}) \leq C\lambda(t)^{-1} \sim C\sqrt{\frac {6}{t}},
\end{equation*}
we deduce that $h(Q_*)=0$ a.e., so that $Q_*$ is $\widetilde{\mathcal N}$-valued. From this point on, we can proceed exactly as in \cite[Lemma~3]{majumdarzarnescu10}. We conclude that $Q_{t}$ converges to $Q_*$ strongly in $H^1$, and that
\begin{equation*}
Q_* = \sqrt{\frac 32} \left( n_* \otimes n_* -\frac 13 I \right),
\end{equation*}
where $n_*\in H^1(\Omega;\mathbb S^2)$ is a minimizing harmonic map. In particular, $Q_*$ is smooth in $\Omega\setminus \Sigma$, where $\Sigma\subseteq \Omega$ is a finite set of interior point singularities \cite{schoenuhlenbeck82,schoenuhlenbeck83}.

As in the Ginzburg-Landau case \cite{bbh93}, the convergence of
$Q_{t}$ towards $Q_*$ can be improved away from the singularities
$\Sigma$. The arguments in \cite{bbh93} have been adapted to the liquid
crystal case in \cite{majumdarzarnescu10}. The asymptotic regime $L\to
0$ in \cite{majumdarzarnescu10} corresponds to the 
limit $t\to + \infty $ in the present work.
 The arguments in \cite[Proposition~4]{majumdarzarnescu10} and
 \cite[Proposition~6]{majumdarzarnescu10} are
 straightforward to adapt, and we obtain the convergence
\begin{equation*}
\frac {1}{2} (| Q_{t}|^2 - 1)^2 + \frac{\lambda(t)}{t}\, h(
Q_{t})\longrightarrow 0,\quad\text{locally uniformly in }\overline
\Omega\setminus \Sigma.
\end{equation*}
Since we have in addition, thanks to the maximum principle, $|Q_{t}|\leq 1$ (cf e.g. \cite[Proposition~3]{majumdarzarnescu10} ), we deduce -- using also \eqref{lambda} -- that 
\begin{equation}\label{convQt}
|Q_{t}|\longrightarrow 1\quad\text{locally uniformly in }\overline \Omega\setminus \Sigma.
\end{equation}

Recall that by assumption, $Q_{t}(x_t)=0$ for a sequence $x_t\to
x_*\in\overline\Omega$. The uniform convergence
\eqref{convQt}
 away from $\Sigma$ implies that $x_*\in \Sigma$. In particular $x_*$ lies well inside $\Omega$. Our next step will consist in ``blowing up'' around $x_t$.

\section{Blowing up}\label{sblowup}

We  fix $\delta>0$ such that $B(x_t,\delta)\subset\Omega$ for all $j$. We consider the blown-up maps
\begin{equation*}
\overline Q_{t} (x) = Q_{t}\left( x_t + \frac{x}{\sqrt t} \right),\quad x\in B_{\delta\sqrt {t}}.
\end{equation*}
The map $\overline Q_{t}$ minimizes the energy functional
\begin{equation}\label{Et}
E_{t}(Q;B_{\delta\sqrt {t}}) = \int_{B_{\delta\sqrt {t}}}\left(\frac{\widetilde L}{2}|\nabla Q|^2 + \frac 12 (|Q|^2-1)^2 \right)\, dx + \frac{\lambda(t)}{t}\int_{B_{\delta\sqrt {t}}}h(Q)\, dx,
\end{equation}
with respect to its own boundary conditions. Fix any $R>0$. For large enough $t$, $\overline Q_t$ is defined in $B_R$ and solves the Euler-Lagrange equation
\begin{equation*}
\widetilde L \Delta \overline Q_t = 2(|\overline Q_t|^2-1)\overline Q_t + \frac{\lambda(t)}{t} \nabla h(\overline Q_t).
\end{equation*}
The uniform bound $|\overline Q_t|\leq 1$ and standard elliptic estimates thus imply
\begin{equation*}
|\nabla \overline Q_t |\leq C_R\quad\text{in }B_R,
\end{equation*}
where $C_R$ is a constant that may depend on $R$ but not on $t .$
Therefore, up to extracting a subsequence, we may assume that $\overline Q_t$ converges locally uniformly, and weakly in $H^1_{\mathrm{loc}}$, to a map $Q_\infty\in H^1_{\mathrm{loc}}(\mathbb R^3;\mathcal S)$. Moreover, since the convergence is locally uniform, $Q_\infty$ is continuous and satisfies
\begin{equation}\label{Qinfty0}
Q_\infty(0)=0.
\end{equation}

We claim that $Q_\infty$ locally minimizes a Ginzburg-Landau energy; this is a very important simplification.

\begin{lemma}\label{localminimality}For all $R>0 ,$ the limiting profile $Q_\infty$ minimizes the energy functional
\begin{equation}\label{E}
E(Q;B_R) = \int_{B_R} \left( \frac{\widetilde L}{2}|\nabla Q|^2 + \frac 12 (|Q|^2-1)^2 \right)\, dx ,
\end{equation}
with respect to its own boundary condition.
\end{lemma}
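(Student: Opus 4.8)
The plan is to combine lower semicontinuity with an explicit gluing construction. The essential point, which I would establish first, is that the locally uniform convergence $\overline Q_t\to Q_\infty$ together with the Euler--Lagrange equation actually upgrades to \emph{strong} $H^1_{\mathrm{loc}}$ convergence; without this the Dirichlet energy stored in the gluing layer cannot be controlled and the argument collapses.

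\textbf{Step 1 (strong $H^1_{\mathrm{loc}}$ convergence).} Fix $R>0$ and $\phi\in C_c^\infty(B_{2R})$ with $\phi\equiv1$ on $B_R$. I would test the equation $\widetilde L\Delta\overline Q_t=2(|\overline Q_t|^2-1)\overline Q_t+\tfrac{\lambda(t)}{t}\nabla h(\overline Q_t)$ against $(\overline Q_t-Q_\infty)\phi^2$ and integrate by parts. The right-hand side tends to $0$ because the bracket on the right of the equation is bounded in $L^\infty$ (using $|\overline Q_t|\le1$ and $\lambda(t)/t\to0$, cf. \eqref{lambda}) while $\overline Q_t-Q_\infty\to0$ locally uniformly. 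On the left, $\int\widetilde L\,\nabla\overline Q_t:\nabla Q_\infty\,\phi^2\to\int\widetilde L|\nabla Q_\infty|^2\phi^2$ by weak $H^1_{\mathrm{loc}}$ convergence, and $\int\widetilde L\,\nabla\overline Q_t:(\overline Q_t-Q_\infty)\nabla(\phi^2)\to0$ using the bound $|\nabla\overline Q_t|\le C_R$ and uniform convergence. Hence $\int|\nabla\overline Q_t|^2\phi^2\to\int|\nabla Q_\infty|^2\phi^2$, which together with weak convergence gives $\nabla\overline Q_t\to\nabla Q_\infty$ strongly in $L^2(B_R)$. In particular $E_t(\overline Q_t;B_R)\to E(Q_\infty;B_R)$, using also that $(|\overline Q_t|^2-1)^2\to(|Q_\infty|^2-1)^2$ uniformly and $\tfrac{\lambda(t)}{t}\int_{B_R}h(\overline Q_t)\to0$.

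\textbf{Step 2 (gluing and conclusion).} Let $Q\in H^1(B_R;\mathcal S)$ with $Q=Q_\infty$ on $\partial B_R$; composing with the (1-Lipschitz) radial projection onto $\{|Q|\le1\}$ does not increase $E(\cdot;B_R)$ and fixes the trace, so I may assume $|Q|\le1$. Extend $\widehat Q:=Q$ on $B_R$ and $\widehat Q:=Q_\infty$ on $\mathbb R^3\setminus B_R$. For small $h>0$ pick $\zeta\in C_c^\infty(B_{R+h})$, $\zeta\equiv1$ on $B_R$, $0\le\zeta\le1$, $|\nabla\zeta|\le C/h$, and for $t$ large set $\widehat Q_t:=\zeta\widehat Q+(1-\zeta)\overline Q_t$; this is admissible for $\overline Q_t$ on $B_{\delta\sqrt t}$ (it agrees with $\overline Q_t$ outside $B_{R+h}$, equals $Q$ on $B_R$, and $|\widehat Q_t|\le1$). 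By minimality of $\overline Q_t$, cancelling the common part,
\[
E_t(\overline Q_t;B_R)\le E_t(\overline Q_t;B_{R+h})\le E_t(\widehat Q_t;B_{R+h})=E_t(Q;B_R)+E_t(\widehat Q_t;B_{R+h}\setminus B_R).
\]
On $B_{R+h}\setminus B_R$ one has $\widehat Q=Q_\infty$, so $\nabla\widehat Q_t=\nabla Q_\infty+(1-\zeta)\nabla(\overline Q_t-Q_\infty)-(\overline Q_t-Q_\infty)\nabla\zeta$; integrating, the second term $\to0$ by Step 1 (strong $H^1$ on a fixed relatively compact annulus) and the third by uniform convergence, so with $|\widehat Q_t|\le1$ and $\lambda(t)/t\to0$ I get $\limsup_t E_t(\widehat Q_t;B_{R+h}\setminus B_R)\le C\int_{B_{R+h}\setminus B_R}|\nabla Q_\infty|^2+C|B_{R+h}\setminus B_R|=:\varepsilon(h)$. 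Also $E_t(Q;B_R)\to E(Q;B_R)$ since $\int_{B_R}h(Q)<\infty$. Passing to the limit $t\to\infty$ in the chain above, using Step 1 on the left, gives $E(Q_\infty;B_R)\le E(Q;B_R)+\varepsilon(h)$; letting $h\to0^+$ and using absolute continuity of the integral, $E(Q_\infty;B_R)\le E(Q;B_R)$.

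\textbf{Main obstacle.} The crux is Step 1: with only weak $H^1_{\mathrm{loc}}$ convergence the Dirichlet energy of the transition layer is $O(1)$ rather than $o(1)$. The equation, the bound $|\overline Q_t|\le1$, the local gradient estimate and the decay $\lambda(t)/t\to0$ are precisely what promote the convergence to strong; once that is in hand, the rest is a routine gluing with a thin shell whose width is sent to zero after $t\to\infty$. (One could alternatively interpolate on $\sim N$ concentric shells chosen by a pigeonhole, but the cross-terms there run into the same difficulty, so establishing strong convergence first is cleaner.)
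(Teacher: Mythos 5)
Your proof is correct, but it takes a genuinely different and substantially longer route than the paper. The paper exploits an algebraic cancellation: taking a perturbation $P\in H^1_0(B_R;\mathcal S)$ and expanding $0\le E_t(\overline Q_t+P;B_R)-E_t(\overline Q_t;B_R)$, the quadratic terms $|\nabla\overline Q_t|^2$ cancel and the only remaining term involving $\nabla\overline Q_t$ is the \emph{linear} one $\widetilde L\int\nabla\overline Q_t\cdot\nabla P$, which passes to the limit under mere weak $H^1$ convergence; the potential terms are handled by compactness of the Sobolev embedding (plus the locally uniform convergence and $\lambda(t)/t\to 0$). Thus the ``main obstacle'' you identify -- controlling the Dirichlet energy of a transition layer -- simply does not arise: there is no layer, and no strong convergence is needed. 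Your two steps are nevertheless both sound: Step~1 (testing the equation against $(\overline Q_t-Q_\infty)\phi^2$) is a correct Caccioppoli-type upgrade to strong $H^1_{\mathrm{loc}}$ convergence, though it could be obtained more cheaply from interior $W^{2,p}$ estimates, since the right-hand side of the Euler--Lagrange equation is uniformly bounded in $L^\infty$; and in Step~2 you could even bypass strong convergence by estimating the gradient on the shell with the uniform bound $|\nabla\overline Q_t|\le C_{2R}$, which gives a contribution $O(h)$. What your approach buys is robustness -- the gluing argument survives when the leading term is not exactly quadratic and the cancellation trick fails -- and the byproduct $E_t(\overline Q_t;B_R)\to E(Q_\infty;B_R)$; what the paper's approach buys is a five-line proof.
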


\begin{proof}
 Let $P\in H_0^1(B_R;\mathcal S)$. Since $\overline Q_t$ is minimizing, it holds
\begin{align*}
0 & \leq E_t(\overline Q_t + P; B_R) - E_t(\overline Q_t; B_R) \\
& = \widetilde{L}\int_{B_R} \nabla \overline Q_t \cdot\nabla P + \frac{\widetilde L}{2}\int_{B_R} |\nabla P|^2 \\
&\quad + \frac{1}{2}\int_{B_R} (|\overline Q_t+P|^2-1)^2 -\frac 12 \int_{B_R} (|\overline Q_t|^2-1)^2 \\
& \quad + \frac{\lambda(t)}{t}\int_{B_R} \big[ h(\overline Q_t+P)-h(\overline Q_t) \big] \, dx.
\end{align*}
Using the weak $H^1$ convergence of $\overline Q_t$ (which implies also strong $L^6$ convergence), we obtain in the limit $t\to + \infty$
\begin{align*}
0 & \leq \widetilde L \int_{B_R} \nabla Q_\infty \cdot\nabla P \, dx + \frac{\widetilde L}{2}\int_{B_R} |\nabla P|^2 \\
&\quad + \frac{1}{2}\int_{B_R} (|Q_\infty+P|^2-1)^2 -\frac 12 \int_{B_R} (|Q_\infty|^2-1)^2 \\
& = E(Q_\infty + P;B_R)-E(Q_\infty;B_R).
\end{align*}
Therefore $Q_\infty$ minimizes \eqref{E}, as claimed. 
\end{proof}

%\todo{
%For the last part of the proof, it seems that we also need the existence of the limit
%\begin{equation*}
%Q^\infty(\omega) = \lim_{R\to\infty} Q_\infty(R\omega),\quad |\omega|=1
%\end{equation*}
%with a good enough convergence (at least for a subsequence $R_j$). We obviously need at least to know that $|Q_\infty(R\omega)|\to 1$. Maybe it can be obtained using Millot-Pisante techniques?
%}

Moreover, proceeding exactly as in the proof of \cite[Theorem~1.(v)]{henaomajumdarpisante14}, we obtain the energy bound
\begin{equation}\label{boundQinfty}
E(Q_\infty ; B_R) \leq C R.
\end{equation}
The bound \eqref{boundQinfty} follows from two main ingredients: an  energy monotonicity inequality for minimizers of \eqref{Et} \cite[Lemma~2]{majumdarzarnescu10}, and an energy bound for $\mathbb S^2$-valued  minimizing harmonic maps near their singularities (following from the energy monotonicity for minimizing harmonic maps, see e.g. \cite[Lemma~2.2.5]{wanglin08}).

\section{Blowing down}\label{sblowdown}

Our last step consists in ``blowing down'' $Q_\infty$ around the origin, and eventually reaching a contradiction with \eqref{Qinfty0}.
Let $B_1$ be the unit ball in $\mathbb{R}^3.$ We consider the blown-down maps
\begin{equation*}
\underline Q_R (x) = Q_\infty(Rx),\quad x\in B_1.
\end{equation*}
Note that \eqref{Qinfty0} implies that
\begin{equation}\label{blowdown0}
\underline Q_R(0)=0, \quad\forall R>0.
\end{equation}
By definition, $\underline Q_R \in H^1(B_1)$ for all $R>0$. We have:

\begin{lemma}\label{blowdown}
Up to a subsequence,
\begin{equation*}
\underline Q_R\longrightarrow \underline Q \quad\text{ in }
H^1(B_1;\mathcal S ),
\end{equation*}
for some $\mathbb{S}^4$-valued harmonic map $\underline Q.$  Moreover,
$|\underline Q_R|$ stays bounded away from zero uniformly in $B_1 .$

\end{lemma}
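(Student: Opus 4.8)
The plan is to exploit the minimality of $Q_\infty$ (Lemma \ref{localminimality}) together with the linear energy growth \eqref{boundQinfty}, following the scheme of Lin--Wang \cite{LinWang} as developed in \cite{MillPis}. First I would record how the energy of the blown-down maps $\underline Q_R$ transforms under the rescaling $x\mapsto Rx$ in dimension three: we have $E(\underline Q_R;B_1)=\frac1R E(Q_\infty;B_R)\le C$ by \eqref{boundQinfty}, and since $Q_\infty$ minimizes the Ginzburg-Landau energy $E$, each $\underline Q_R$ minimizes the rescaled functional
\begin{equation*}
E_R(Q;B_1)=\int_{B_1}\left(\frac{\widetilde L}{2}|\nabla Q|^2 + \frac{R^2}{2}(|Q|^2-1)^2\right)dx
\end{equation*}
with respect to its own boundary conditions. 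The uniform $H^1(B_1)$ bound lets me extract a weak limit $\underline Q$; the point is to upgrade this to strong $H^1$ convergence and to identify $\underline Q$ as an $\mathbb S^4$-valued (equivalently, $|\underline Q|\equiv 1$) minimizing harmonic map. The penalization coefficient $R^2\to\infty$ forces $\int_{B_1}(|\underline Q_R|^2-1)^2\le C/R^2\to 0$, so $|\underline Q|=1$ a.e., i.e. $\underline Q$ takes values in the round sphere $\mathbb S^4=\{Q\in\mathcal S:|Q|=1\}$.

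The strong convergence and the harmonic map property are exactly the content of the Lin--Wang / \cite{MillPis} analysis of minimizers of Ginzburg-Landau functionals with diverging penalization: I would invoke that result to conclude that, up to a subsequence, $\underline Q_R\to\underline Q$ strongly in $H^1(B_1)$ and that $\underline Q$ is a minimizing harmonic map into $\mathbb S^4$. (Strong convergence also passes minimality to the limit, which is what makes $\underline Q$ \emph{minimizing} and not merely stationary — this is needed for the regularity step in Section \ref{sblowdown}.) The monotonicity formula for minimizers of \eqref{Et}, already used for \eqref{boundQinfty}, additionally shows that the density $r\mapsto \frac1r E(Q_\infty;B_r)$ is monotone, so the limit $\underline Q$ is in fact a \emph{tangent map} at infinity and is radially homogeneous of degree zero — though for the statement as written one only needs the convergence and the $\mathbb S^4$-valued harmonic character.

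For the last assertion, that $|\underline Q_R|$ is bounded away from $0$ uniformly in $B_1$, I would argue by the clearing-out / small-energy regularity for these minimizers: there is $\varepsilon_0>0$ such that if the scaled energy of $\underline Q_R$ on a ball $B_{2r}(y)\subset B_1$ is below $\varepsilon_0 r$ then $|\underline Q_R|\ge 1/2$ on $B_r(y)$; combined with the uniform energy bound and a covering argument this gives a uniform lower bound on $|\underline Q_R|$ except possibly near a controlled singular set, and the strong $H^1$ convergence to the (regular, since minimizing and $3$-dimensional — Schoen--Uhlenbeck \cite{schoenuhlenbeck84} gives an isolated singular set) limit $\underline Q$ with $|\underline Q|\equiv1$ rules out a sequence of zeros escaping this control. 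The main obstacle is the strong $H^1$ convergence: one must rule out energy concentrating on a lower-dimensional set in the limit $R\to\infty$, and this is precisely where the minimality of $Q_\infty$ enters decisively (a merely stationary limit could lose energy in the limit). I would handle it by quoting the relevant convergence theorem from \cite{MillPis} rather than reproving it, checking only that our functional \eqref{E}, with the extra lower-order term $h$ having been discarded in Lemma \ref{localminimality}, satisfies its hypotheses.
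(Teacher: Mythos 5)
Your proposal follows essentially the same route as the paper: rescale to see that $\underline Q_R$ minimizes the penalized functional $G_R$ on $B_1$, use the linear growth \eqref{boundQinfty} to get the uniform bound $G_R(\underline Q_R)\leq C$, extract a weak $H^1$ limit which the diverging penalization forces onto $\mathbb S^4$, and invoke the Lin--Wang/\cite{MillPis} compactness to upgrade to strong $H^1$ convergence and to pass minimality to the limit; the uniform lower bound on $|\underline Q_R|$ then comes from locally uniform convergence (equivalently, your clearing-out/covering argument) combined with the regularity of $\underline Q$. All of this is consistent with the paper's proof.

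There is, however, one genuine gap, and it sits at the crux of the entire paper. You justify the regularity of $\underline Q$ by saying that Schoen--Uhlenbeck ``gives an isolated singular set'' for a minimizing harmonic map from a three-dimensional domain. That weaker statement (valid for arbitrary compact targets, \cite{schoenuhlenbeck82,schoenuhlenbeck83}) is \emph{not} sufficient here: if $\underline Q$ were permitted an isolated singularity at the origin, then uniform convergence of $\underline Q_R$ away from the singular set would say nothing about $\underline Q_R(0)$, the zeros could accumulate at that singular point, and the claimed uniform lower bound on $|\underline Q_R|$ over all of $B_1$ would fail. This is exactly what happens in the $\mathbb S^2$-valued setting of \cite{BBH} and \cite{majumdarzarnescu10}, where the blow-down limit is $x/|x|$ and minimizers genuinely vanish. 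What is needed, and what the paper invokes, is the sharper result \cite[Theorem~2.7]{schoenuhlenbeck84}: because the target here is $\mathbb S^4$ rather than $\mathbb S^2$, minimizing harmonic maps from three-dimensional domains are smooth, i.e. the singular set is \emph{empty} (equivalently, there is no non-constant homogeneous degree-zero minimizing tangent map from $\mathbb R^3$ into $\mathbb S^4$). Once the singular set is known to be empty, your argument closes as you describe; without that precise input the final assertion of the lemma, and hence the contradiction with \eqref{blowdown0}, does not follow.
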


\begin{proof}
Since $Q_\infty$ minimizes \eqref{E}, the map $\underline Q_R$ minimizes the energy functional
\begin{equation}\label{G}
G_R(Q) = \int_{B_1} \left( \frac{\widetilde L}{2}|\nabla Q|^2 + \frac{R^2}{2}(|Q|^2-1)^2 \right) \, dx.
\end{equation}

Moreover, the energy bound \eqref{boundQinfty} implies the bound
\begin{equation}\label{boundblowdown}
G_R( \underline Q_R) \leq C,
\end{equation}
so that we may extract a subsequence $R\to +\infty$ (indices are implicit), such  that
\begin{equation}\label{convblowdown1}
\underline Q_{R}\longrightarrow \underline Q \quad\text{weakly in }H^1(B_1; \mathcal S).
\end{equation}
The energy bound \eqref{boundblowdown} also implies that $\underline Q$ is $\mathbb S^4$-valued. 
Now, thanks to Lemma~\ref{localminimality}, we can appeal to Proposition  4.2 in \cite{MillPis} to conclude that the convergence of $\underline Q_R$ to $\underline Q$ can be improved to strong convergence in $H^1$. In \cite{MillPis}, the proof relies on \cite[Theorem~C]{LinWang} in the case of $\mathbb R^3$-valued maps converging to $\mathbb S^2$-valued maps. However, \cite[Theorem~C]{LinWang} is valid in greater generality and applies to our case. Moreover, the analysis in \cite{MillPis} does not make use of the dimension of the target space other than  to provide an explicit constant in their computations. 

Next, the minimizing character of $\underline Q$ follows from Step 1 in \cite[Corollary 4.1]{MillPis}, which also applies to our case without modifications. 
From this we conclude that $\underline Q$ is an $\mathbb{S}^4$-valued minimizing harmonic map. 
As a consequence,
Schoen and Uhlenbeck's regularity result \cite[Theorem~2.7]{schoenuhlenbeck84} ensures that $\underline Q$ is smooth in $B_1$. 

Since  the proof of \cite[Proposition~4.2]{MillPis} also shows that the convergence of $\underline Q_R$ towards $\underline Q$ is actually uniform away from the singularities of $\underline Q$, we obtain in particular that
\begin{equation}\label{asympS4}
|\underline Q_R|\longrightarrow 1 \quad\text{uniformly in }B_1,
\end{equation}
which is the desired conclusion.
\end{proof}

We note that \eqref{asympS4} contradicts \eqref{blowdown0} and thus the proof of Theorem~\ref{nomelting} is complete. \qed

\section{Proof of Corollary~\ref{biax}}\label{sproofbiax}

 In \cite{canevari14}, Canevari makes the crucial observation that if $Q$ is {almost uniaxial}, i.e. 
\begin{equation*}
\max_{\overline \Omega} \beta (Q) <1,
\end{equation*}
then the $Q$-tensor must vanish. More precisely, in our case the following result holds.

\begin{lemma}{\cite[Lemma~3.10]{canevari14}} \label{canevaricone}Let $Q\in C^1(\overline \Omega; \mathcal S)$ with uniaxial boundary condition of the form \eqref{bcond}. If $n_b\colon\partial\Omega\to\mathbb S^2$ is topologically non trivial, and $Q$ is almost uniaxial, then
\begin{equation*}
\min_{\overline \Omega} |Q|=0.
\end{equation*} 
\end{lemma}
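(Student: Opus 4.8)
The plan is to exploit the fact that when $\beta(Q)<1$ everywhere, the range of $Q/|Q|$ (wherever $Q\neq 0$) is confined to the subset of the unit sphere in $\mathcal S$ consisting of $Q$-tensors that are \emph{not} maximally biaxial, and that this subset retracts onto the uniaxial manifold $\widetilde{\mathcal N}\cong\mathbb{RP}^2$. I would first argue by contradiction: assume $\min_{\overline\Omega}|Q|>0$, so that $P:=Q/|Q|$ is a well-defined $C^1$ map from $\overline\Omega$ into the unit sphere $\mathbb S^4\subset\mathcal S$. The condition $\max_{\overline\Omega}\beta(Q)<1$ says precisely that $P$ avoids the set $\{\beta=1\}$. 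The key topological input is that the complement of $\{\beta=1\}$ inside $\mathbb S^4$ deformation retracts onto $\widetilde{\mathcal N}$; one checks this by writing the eigenvalue decomposition and noting that $\beta<1$ forces the two extreme eigenvalues to stay distinct from one another in a controlled way, so one can continuously push the ordered-eigenvalue vector toward the uniaxial configuration without ever hitting $\beta=1$. Granting this retraction $r$, the composition $r\circ P\colon\overline\Omega\to\widetilde{\mathcal N}$ is a continuous (indeed $C^1$ up to homotopy) extension to all of $\overline\Omega$ of a map whose boundary trace is homotopic to $Q_b=\widetilde Q_b$, i.e.\ to $x\mapsto\sqrt{3/2}(n_b\otimes n_b-\tfrac13 I)$.

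Next I would translate this into a statement about $n_b$. Since $\Omega$ is simply connected (and in particular $\pi_1(\Omega)=0$), the existence of a continuous extension $\overline\Omega\to\widetilde{\mathcal N}$ of $Q_b|_{\partial\Omega}$ means that $Q_b|_{\partial\Omega}$ is null-homotopic as a map $\partial\Omega\to\widetilde{\mathcal N}\cong\mathbb{RP}^2$. Now $\partial\Omega$ is a closed orientable surface (the boundary of a smooth bounded domain in $\mathbb R^3$), and for a simply connected $\Omega$ it is a union of spheres; the relevant homotopy classes are governed by $\pi_2(\mathbb{RP}^2)\cong\mathbb Z$ (and $\pi_1(\mathbb{RP}^2)\cong\mathbb Z/2$ on any handles, but these are absent here). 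The lifting/covering argument, using that $\mathbb S^2\to\mathbb{RP}^2$ is the universal cover and $\partial\Omega$ is simply connected, shows that $Q_b|_{\partial\Omega}$ being null-homotopic into $\mathbb{RP}^2$ forces $n_b$ to lift to a continuous $\mathbb S^2$-valued map that is itself null-homotopic, i.e.\ $n_b$ is topologically trivial. This contradicts the hypothesis that $n_b$ is topologically non-trivial, and the contradiction establishes $\min_{\overline\Omega}|Q|=0$.

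I would organize the write-up as: (i) construct the retraction $r\colon\mathbb S^4\setminus\{\beta=1\}\to\widetilde{\mathcal N}$ explicitly via eigenvalue ordering, checking continuity and that it is a deformation retract; (ii) observe that $\{\min|Q|>0\}\wedge\{\max\beta<1\}$ yields $r\circ(Q/|Q|)$ as a $\widetilde{\mathcal N}$-valued extension of the boundary datum; (iii) invoke the covering space $\mathbb S^2\to\widetilde{\mathcal N}$ together with simple connectedness of $\Omega$ and $\partial\Omega$ to deduce triviality of $n_b$; (iv) conclude. Since this is Canevari's Lemma~3.10 quoted verbatim, I would reference \cite{canevari14} for the detailed verification but still sketch the retraction, as it is the geometric heart of the argument.

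The main obstacle I anticipate is step (i): producing a clean, manifestly continuous deformation retraction of $\mathbb S^4\setminus\{\beta=1\}$ onto $\widetilde{\mathcal N}$. The set $\{\beta=1\}$ (maximally biaxial tensors) is a codimension-one subvariety of $\mathbb S^4$ and its complement need not be obviously homotopy equivalent to $\mathbb{RP}^2$ without some care; one must verify that the ordered eigenvalues stay in a contractible region away from the "collision at the middle value" locus, and handle the $O(3)$-action on eigenframes so that the retraction is well-defined on $Q$-tensors (not just on eigenvalue vectors). A secondary subtlety is making sure the $C^1$ regularity of $Q$ (rather than mere continuity) is either used or shown to be unnecessary — here only continuity of $Q/|Q|$ matters, which is automatic once $|Q|$ is bounded below, so this is a non-issue in practice.
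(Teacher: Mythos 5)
Your proposal is correct and follows essentially the same route as the paper, which simply invokes Canevari's Lemma~3.10 and records as the key topological fact that the non-vanishing, almost-uniaxial tensors retract onto $\mathcal N\simeq\mathbb{RP}^2$; your normalize--retract--lift scheme is exactly that argument spelled out. The one point to tidy is that $\mathbb S^4\setminus\{\beta=1\}$ has \emph{two} components, distinguished by the sign of $\mathrm{tr}(Q^3)$, each retracting onto a copy of $\mathbb{RP}^2$ via the leading eigendirection (which is simple precisely on these components); connectedness of $\overline\Omega$ together with the boundary datum keeps $Q/|Q|$ in the positive component, after which your covering-space argument goes through.
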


In \cite{canevari14} the proof is carried out in the two-dimensional case but a careful reading shows that the argument still holds in the three-dimensional setting, since the result relies only on topological considerations in the target space $\mathcal S$. Indeed, the crucial observation leading to \cite[Lemma~3.10]{canevari14} is the fact that, for any $C\geq 1$ and $1>\delta>0$, the set
\begin{equation*}
\left\lbrace Q\in\mathcal S\colon \delta \leq |Q| \leq C,\; \beta(Q)\leq 1-\delta \right\rbrace\subset\mathcal S
\end{equation*}
is topologically equivalent to $\mathcal N\simeq \mathbb{R P}^2$.

As a consequence of Theorem~\ref{nomelting} we see that, in light of Lemma~\ref{canevaricone}, $Q_T$ must be maximally biaxial at some point for sufficiently low temperature. The proof of Corollary~\ref{biax} is complete.\qed

We finish with a few remarks. Theorem~\ref{nomelting} implies the existence of a point where maximal biaxiality is achieved, however it does not provide a characterization of the location of this (or these) point(s) in terms of the domain or the boundary datum. Also the number of  these points of biaxial escape cannot be deduced from the topological conclusion in 
\cite[Lemma~3.10]{canevari14}. To finish, a more detailed description of the defect core is also an interesting matter worthy of pursuit. In this last direction, we mention the stability study of the radial hedgehog defect performed in \cite{Igetal}.

%\todo{just a few lines are needed here, we can do that later...}

\bibliographystyle{plain}
\bibliography{biax}

\begin{thebibliography}{10}

\bibitem{bbh93}
F.~Bethuel, H.~Brezis, and F.~H{\'e}lein.
\newblock Asymptotics for the minimization of a {G}inzburg-{L}andau functional.
\newblock {\em Calc. Var. Partial Differential Equations}, 1(2):123--148, 1993.

\bibitem{BBH}
F.~Bethuel, H.~Brezis, and F.~H{\'e}lein.
\newblock {\em {G}inzburg-{L}andau Vortices}.
\newblock Birkh\"{a}user, 1994.

\bibitem{canevari14}
G.~Canevari.
\newblock Biaxiality in the asymptotic analysis of a 2-{D} {L}andau-de {G}ennes
  model for liquid crystals.
\newblock 2014.

\bibitem{degennes}
P.G. De~Gennes and J.~Prost.
\newblock {\em The {P}hysics of {L}iquid {C}rystals}.
\newblock Oxford {U}niversity {P}ress, second edition, 1993.

\bibitem{Igetal}
R.~Ignat, L.~Nguyen, V.~Slastikov, and A.~Zarnescu.
\newblock Stability of the vortex defect in the {L}andau--de {G}ennes theory
  for nematic liquid crystals.
\newblock {\em C.R. Acad. Sci. Paris}, I 351:533--537, 2013.

\bibitem{kaiserwiesehess92}
R.~Kaiser, W.~Wiese, and S.~Hess.
\newblock Stability and instability of an uniaxial alignment against biaxial
  distortions in the isotropic and nematic phases of liquid crystals.
\newblock {\em J. Non-Equilib. Thermodyn.}, 17:153--169, 1992.

\bibitem{kraljvirga01}
S.~Kralj and E.G. Virga.
\newblock Universal fine structure of nematic hedgehogs.
\newblock {\em J. Phys. A: Math. Gen.}, 34(4):829, 2001.

\bibitem{lamy14}
X.~Lamy.
\newblock Uniaxial symmetry in nematic liquid crystals.
\newblock 2014.

\bibitem{LinWang}
F.~Lin and C.~Wang.
\newblock Harmonic and quasi-harmonic spheres {II}.
\newblock {\em Comm. Anal. Geom.}, 10:341--375, 2002.

\bibitem{wanglin08}
F.~Lin and C.~Wang.
\newblock {\em The analysis of harmonic maps and their heat flows}.
\newblock World Scientific Publishing, 2008.

\bibitem{lyuksyutov78}
I.F. Lyuksyutov.
\newblock Topological instability of singularities at small distances in
  nematics.
\newblock {\em Sov. Phys. JETP}, 48:178--179, 1978.

\bibitem{henaomajumdarpisante14}
A.~Majumdar, A.~Pisante, and D.~Henao.
\newblock Uniaxial versus biaxial character of nematic equilibria in three
  dimensions.
\newblock 2013.

\bibitem{majumdarzarnescu10}
A.~Majumdar and A.~Zarnescu.
\newblock Landau-de {G}ennes theory of nematic liquid crystals: The
  {O}seen--{F}rank limit and beyond.
\newblock {\em Arch. Ration. Mech. Anal.}, 196(1):227--280, 2010.

\bibitem{MillPis}
V.~Millot and A.~Pisante.
\newblock Symmetry of local minimizers for the three-dimensional
  {G}inzburg-{L}andau functional.
\newblock {\em J. Eur. Math. Soc.}, 12:1069--1096, 2010.

\bibitem{gartlandmkaddem00}
S.~Mkaddem and E.C. Gartland.
\newblock Fine structure of defects in radial nematic droplets.
\newblock {\em Phys. Rev. E}, 62:6694--6705, 2000.

\bibitem{penzenstadlertrebin89}
E.~Penzenstadler and H.-R. Trebin.
\newblock Fine structure of point defects and soliton decay in nematic liquid
  crystals.
\newblock {\em J. Phys. France}, 50(9):1027--1040, 1989.

\bibitem{rossovirga96}
R.~Rosso and E.G. Virga.
\newblock Metastable nematic hedgehogs.
\newblock {\em J. Phys. A: Math. Gen.}, 29(14):4247, 1996.

\bibitem{schoenuhlenbeck82}
R.~Schoen and K.~Uhlenbeck.
\newblock A regularity theory for harmonic maps.
\newblock {\em J. Differential Geom.}, 17(2):307--335, 1982.

\bibitem{schoenuhlenbeck83}
R.~Schoen and K.~Uhlenbeck.
\newblock Boundary regularity and the {D}irichlet problem for harmonic maps.
\newblock {\em J. Differential Geom.}, 18(2):253--268, 1983.

\bibitem{schoenuhlenbeck84}
R.~Schoen and K.~Uhlenbeck.
\newblock Regularity of minimizing harmonic maps into the sphere.
\newblock {\em Invent. math.}, 78:89--100, 1984.

\bibitem{sonnetkilianhess95}
A.~Sonnet, A.~Kilian, and S.~Hess.
\newblock Alignment tensor versus director: Description of defects in nematic
  liquid crystals.
\newblock {\em Phys. Rev. E}, 52:718--722, 1995.

\end{thebibliography}

\end{document}